\newtheorem{theorem}{Theorem}
\newtheorem{lemma}{Lemma}
\newtheorem*{theorema} {Theorem A (Wolff)}
\begin{document}

\title[Wolff's Problem of Ideals]{Wolff's Problem of Ideals in the Multiplier Algebra on Weighted Dirichlet
Space}

\author{Debendra P. Banjade and Tavan T. Trent}

\address{Department of Mathematics\\
 The University of Alabama\\
 Box 870350\\
 Tuscaloosa, AL 35487-0350\\
 (205)758-4275}

\email{dpbanjade@crimson.ua.edu, ttrent@as.ua.edu}

\subjclass[2010]{{Primary: 30H50, 31C25, 46J20} }

\keywords{corona theorem, Wolff's theorem, weighted Dirichlet space}
\begin{abstract}
We establish an analogue of Wolff's theorem on ideals in $H^{\infty}(\mathbb{D})$
for the multiplier algebra of weighted Dirichlet space.
\end{abstract}

\maketitle
\section{Introduction}

In this paper we wish to extend a theorem of Wolff, concerning ideals in $H^{\infty}(\mathbb{D})$,  to the setting
of multiplier algebras on weighted Dirichlet spaces. Our techniques will closely follow those used in Banjade-Trent [BT] for
the (unweighted) Dirichlet space.  The new material requires the boundedness of a certain singular integral operator (Lemma 3) and the boundedness of the Beurling transform (Lemma 4) on some $L^2$ spaces with weights.

In 1962 Carleson {[}C{]} proved his famous \lq\lq Corona theorem''
characterizing when a finitely generated ideal in $H^{\infty}(\mathbb{D})$
is actually all of $H^{\infty}(\mathbb{D})$. Independently, Rosenblum
{[}R{]}, Tolokonnikov {[}To{]}, and Uchiyama gave an infinite version
of Carleson's work on $H^{\infty}(\mathbb{D})$. In an effort to classify
ideal membership for finitely-generated ideals in $H^{\infty}(\mathbb{D})$,
Wolff {[}G{]} proved the following version:

\begin{theorema} If
\begin{align}
\{f_{j}\}_{j=1}^{n}\subset H^{\infty}(\mathbb{D}),H\in H^{\infty}(\mathbb{D})\quad\text{and}\notag\\
|H(z)|\le\left(\sum_{j=1}^{n}\,|f_{j}(z)|^{2}\right)^{\frac{1}{2}}\;\;\text{for all }\, z\in\mathbb{D},
\end{align}
 then
\[
H^{3}\in\mathcal{I}(\{f_{j}\}_{j=1}^{n}),
\]

\smallskip
\noindent
the ideal generated by $\{f_{j}\}_{j=1}^{n}$ in $H^{\infty}(\mathbb{D})$.
\end{theorema}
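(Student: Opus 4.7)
The plan is to exhibit bounded holomorphic $g_1,\dots,g_n \in H^\infty(\mathbb{D})$ satisfying $\sum_{j=1}^n f_j g_j = H^3$, which is exactly the assertion $H^3 \in \mathcal{I}(\{f_j\})$. The natural starting point is the non-holomorphic Koszul solution $g_j^0 = H^3 \bar{f}_j/\Phi$, where $\Phi = \sum_k |f_k|^2$; the hypothesis (1) gives the pointwise bound $|g_j^0| \le |H|^3/\Phi^{1/2} \le |H|^2 \le \|H\|_\infty^2$, and $\sum_j f_j g_j^0 = H^3$ wherever $\Phi > 0$. To convert the $g_j^0$ into holomorphic functions without disturbing this identity, I would use the antisymmetric correction ansatz $g_j = g_j^0 + \sum_{k=1}^n f_k\,a_{jk}$ with $a_{jk} = -a_{kj}$, which automatically leaves $\sum_j f_j g_j$ unchanged.

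Writing $\phi_k = \bar{f}_k/\Phi$ so that $\sum_k f_k \phi_k \equiv 1$, a direct calculation using $\bar\partial f_k = 0$ yields $\bar\partial g_j^0 = H^3 \bar\partial \phi_j = H^3 \sum_k f_k\bigl(\phi_k \bar\partial \phi_j - \phi_j \bar\partial \phi_k\bigr)$, the last equality following from $\sum_k f_k \bar\partial \phi_k = \bar\partial(\sum_k f_k \phi_k) = 0$. Demanding $\bar\partial g_j = 0$ therefore reduces the problem to the antisymmetric $\bar\partial$-system
\[
\bar\partial a_{jk} \;=\; -H^3\bigl(\phi_k \bar\partial \phi_j - \phi_j \bar\partial \phi_k\bigr), \qquad a_{jk} = -a_{kj},
\]
which I would seek to solve with $L^\infty$-bounded $a_{jk}$. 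The three factors of $H$ are exactly what makes the Carleson-measure estimate work: since $|\phi_k| \le 1/|f|$ and $|\bar\partial \phi_j| \lesssim |f'|/|f|^2$ with $|f'| := (\sum_k |f_k'|^2)^{1/2}$, the hypothesis $|H| \le |f|$ gives $|H^3 \phi_k \bar\partial \phi_j| \lesssim (|H|/|f|)^3 |f'| \le |f'|$, and $|f'|^2(1-|z|^2)\,dA$ is a Carleson measure with norm $\lesssim \sum_k \|f_k\|_\infty^2$ by the Littlewood-Paley characterization of $H^\infty$. Without the full third power of $H$, the singularity $1/|f|^3$ coming from $\phi_k\,\bar\partial\phi_j$ could not be absorbed.

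The principal obstacle is producing bounded $a_{jk}$ from this $\bar\partial$-data: the Cauchy-transform solution yields only BMO bounds, and passing to $L^\infty$ requires the delicate construction of P.~Jones (or Wolff's original atomic-duality argument), which builds an explicit kernel adapted to a Whitney-stopping-time decomposition of $\mathbb{D}$ and exploits the Carleson-measure hypothesis on the data. Once this $L^\infty$--$\bar\partial$--solvability lemma is invoked, bounded antisymmetric $a_{jk}$ are produced, and setting $g_j = g_j^0 + \sum_k f_k\,a_{jk}$ yields the desired holomorphic multipliers and shows $H^3 \in \mathcal{I}(\{f_j\}_{j=1}^n)$.
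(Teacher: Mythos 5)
Your outline is correct, but note first that the paper does not prove Theorem A at all: it is quoted as background, with the proof attributed to Wolff via Garnett [G]. What you have written is essentially that classical proof --- the Koszul-type first guess $g_j^0=H^3\bar f_j/\Phi$, the antisymmetric correction $g_j=g_j^0+\sum_k f_k a_{jk}$, the reduction to the $\bar\partial$-system with data controlled pointwise by $|f'|$ thanks to the three powers of $H$, the Littlewood--Paley Carleson measure, and the Wolff/Jones $L^\infty$-solvability of $\bar\partial$ --- so relative to the cited source you are reproducing the standard argument, and the identification of the $L^\infty$ $\bar\partial$-lemma as the crux is exactly right. Two routine points you gloss over: the quotients $\bar f_j/\Phi$ are singular on the common zero set of the $f_j$, so one first dilates to functions analytic on a neighborhood of $\overline{\mathbb{D}}$ (where the data, being bounded by $|f'|$, has removable singularities) and then passes to the limit by normal families; and if you invoke Wolff's original duality lemma rather than Jones's construction you must also verify the Carleson condition on $\bar\partial$ of the data, an additional computation. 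It is worth contrasting your route with the one the paper uses for its own Theorem 1 (the $\mathcal{D}_\alpha$ analogue), which also specializes to a proof of a Wolff-type theorem for $H^\infty$ (this is [Tr1]): there the $L^\infty$ $\bar\partial$-lemma is avoided entirely. Because the relevant reproducing kernel is complete Nevanlinna--Pick, commutant lifting reduces the problem to the operator inequality $M_{H^3}M_{H^3}^{\star}\le K^2 M_F^R M_F^{R\star}$, i.e.\ to solving $F\underline{u}_h=H^3h$ with only a norm bound $\Vert\underline{u}_h\Vert\le K\Vert h\Vert$; the antisymmetric correction is packaged as the matrix $Q$ of Lemma 1 and the $\bar\partial$-equation is solved crudely by the Cauchy transform, so one needs only $L^2$-type boundedness of the operator $T$ and of the Beurling transform (Lemmas 3 and 4) rather than the delicate $L^\infty$ construction. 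Your approach buys the sharp classical statement with explicit bounded $g_j$; the paper's approach trades the hard $\bar\partial$-analysis for operator theory and is what generalizes to multiplier algebras where no $L^\infty$ $\bar\partial$-lemma is available.
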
 
It is known that (1) is not, in general, sufficient
for $H$ itself or even for $H^{2}$ to be in $\mathcal{I}(\{f_{j}\}_{j=1}^{n})$ , see Rao's example in Garnett {[}G{]} and Treil {[}T{]}.

\medskip{}
For the algebra of multipliers on Dirichlet space, the analogue of Wolff's ideal theorem was established by the authors in {[}BT{]}.
Since the analogue of the corona theorem for the algebra of multipliers
on weighted Dirichlet space was established in Kidane-Trent $\left[\mbox{KT}\right]$, it seems plausible that Wolff-type ideal results should be extended to
the algebra of multipliers on weighted Dirichlet space. This is what we intend to do in this paper.

\medskip
For $\alpha\in(0,1)$, we use $\mathcal{D}_{\alpha}$ to denote the weighted Dirichlet space on the
unit disk, $\mathbb{D}$. That is,
\begin{multline*}
\mathcal{D}_{\alpha}=\{\, f:\;\mathbb{D}\rightarrow\mathbb{C}\mid\; f\text{ is analytic on }\mathbb{D}\text{ and for }f(z)=\overset{\infty}{\underset{n=0}{\sum}}a_{n}\, z^{n},\\
\left\Vert f\right\Vert _{\mathcal{D}_{\alpha}}^{2}=\underset{n=0}{\overset{\infty}{\sum}}\,(n+1)^{\alpha}\left|a_{n}\right|^{2}<\infty\}.
\end{multline*}

\medskip
We will use other equivalent norms for smooth functions in $\mathcal{D}_{\alpha}$
as follows,
\[
\left\Vert f\right\Vert _{\mathcal{D}_{\alpha}}^{2}=\int_{-\pi}^{\pi}\vert f\vert^{2}d\sigma+\int_{D}\vert f\,'(z)\vert^{2}\left(1-\vert z\vert^{2}\right)^{1-\alpha}dA(z)\qquad\mbox{ and}
\]

\[
\left\Vert f\right\Vert _{\mathcal{D}_{\alpha}}^{2}=\int_{-\pi}^{\pi}\vert f\vert^{2}d\sigma+\int_{-\pi}^{\pi}\int_{-\pi}^{\pi}\frac{\vert f(e^{it})-f(e^{i\theta})\vert^{2}}{\vert e^{it}-e^{i\theta}\vert^{1+\alpha}}\, d\sigma d\sigma.
\]

For ease of notation, we will denote $\left(1-\vert z\vert^{2}\right)^{1-\alpha}dA(z)$
by $dA_{\alpha}(z).$
Also, we will consider $\overset{\infty}{\underset{1}{\oplus}}\,\mathcal{D}_{\alpha}$
as an $l^{2}$-valued weighted Dirichlet space. The norms in this
case are exactly as above but we will replace the absolute value by
$l^{2}$-norms. Moreover, we use $\mathcal{HD_{\alpha}}$ to denote
the harmonic weighted Dirichlet space (restricted to the boundary of $\mathbb{D}$).
The functions in $\mathcal{D}_{\alpha}$ have only vanishing negative
Fourier coefficients whereas the functions in $\mathcal{\mathcal{HD_{\alpha}}}$
may have negative Fourier coefficients which do not vanish. Again,
if $f$ is smooth on $\partial \mathbb{D}$, the boundary of the unit disk
$\mathbb{D}$, then
$$
\left\Vert f\right\Vert _{\mathcal{HD_{\alpha}}}^{2}=\int_{-\pi}^{\pi}\vert f\vert^{2}\, d\sigma+\int_{-\pi}^{\pi}\int_{-\pi}^{\pi}\frac{\vert f(e^{it})-f(e^{i\theta})\vert^{2}}{\vert e^{it}-e^{i\theta}\vert^{1+\alpha}}\, d\sigma d\sigma.$$

We use $\mathcal{M}(\mathcal{D}_{\alpha})$ to denote the multiplier
algebra of weighted Dirichlet space, defined as: $\mathcal{M}(\mathcal{D}_{\alpha})=\left\{ \phi\in\mathcal{D}_{\alpha}:\,\phi f\in\mathcal{D}_{\alpha}\;\text{ for all }f\in\mathcal{D}_{\alpha}\right\} ,$
and we will denote the multiplier algebra of harmonic weighted Dirichlet space
by $\mathcal{M}(\mathcal{HD_{\alpha}})$, defined similarly (but only on $\partial \mathbb{D}$). Also, we will use $\mathcal{M}_{l^{2}}(\mathcal{D}_{\alpha})$ to denote the  multiplier algebra of $l^2$ - valued weighted Dirichlet space.

Given $\left\{ f_{j}\right\} _{j=1}^{\infty}\subset\mathcal{M}(\mathcal{D}_{\alpha})$,
we consider $F(z)=\left(f_{1}(z),\, f_{2}(z),\dots\right)$ for $z\in \mathbb{D}.$
We define the row operator $M_{F}^{R}:\overset{\infty}{\underset{1}{\oplus}}\,\mathcal{D}_{\alpha}\rightarrow\mathcal{D}_{\alpha}$
\, by
$$
M_{F}^{R}\left(\left\{ h_{j}\right\} _{j=1}^{\infty}\right)=\overset{\infty}{\underset{j=1}{\sum}}\, f_{j}h_{j}\,\mbox{ for }\left\{ h_{j}\right\} _{j=1}^{\infty}\in\overset{\infty}{\underset{1}{\oplus}}\,\mathcal{D}_{\alpha}.
$$
 Similarly, we define the column operator $M_{F}^{C}:\mathcal{D}_{\alpha}\rightarrow\overset{\infty}{\underset{1}{\oplus}}\,\mathcal{D}_{\alpha}$\,
by
$$
M_{F}^{C}\left(h\right)=\left\{ f_{j}h\right\} _{j=1}^{\infty}\,\mbox{ for }\, h\in\mathcal{D}_{\alpha}.$$

\medskip{}
We notice that $\mathcal{D}_{\alpha}$ is a reproducing kernel (r.k.)
Hilbert space with r.k.
$$
K_{w}(z)=\underset{n=0}{\overset{\infty}{\sum}}\; \frac{1} {(n+1)^{\alpha}}\left(z\overline{w}\right)^{n}\,\mbox{ for }z,\, w\in\mathbb{D}$$

 and it is well known (see {[}S{]}) that
$$
\frac{1}{k_{w}(z)}=1-\overset{\infty}{\underset{n=1}{\sum}}\, c_{n}(z\overline{w})^{n},\, c_{n}>0,\mbox{ for all }n$$

 Hence, weighted Dirichlet space has a reproducing kernel with {}``one
positive square'' or a {}``complete Nevanlinna-Pick'' kernel. This
property will be used to complete the first part of our proof.

\medskip{}
We know that $\mathcal{M}(\mathcal{D}_{\alpha})\subseteq H^{\infty}(\mathbb{D})$,
but $\mathcal{M}(\mathcal{D}_{\alpha})\neq H^{\infty}(\mathbb{D})$
(e.g., $\overset{\infty}{\underset{n=1}{\sum}}\frac{z^{n^{4m+1}}}{n^{2m\alpha}},\, m=\left[\frac{1}{\alpha}\right]+1$, $z\in D,$
is in $H^{\infty}(D)$ but is not in $\mathcal{D_{\alpha}}$ and so
neither in $\mathcal{M}(\mathcal{D_{\alpha}})$). Hence, $\mathcal{M}(\mathcal{D_{\alpha}})\subsetneq H^{\infty}(\mathbb{D})\,\cap\,\mathcal{D_{\alpha}}$.

\medskip{}
Also, it is worthwhile to note that the point wise hypothesis that

\noindent
$F(z)\, F(z)^{\star}$ $\leq1$ for $z\in\mathbb{D}$ implies that
the analytic Toeplitz operators $T_{F}^{R}$ and $T_{F}^{C}$ defined
on $\overset{\infty}{\underset{1}{\oplus}}\, H^{2}(\mathbb{D})$ and
$H^{2}(\mathbb{D})$, in analogy to that of $M_{F}^{R}$ and $M_{F}^{C}$,
are bounded and
$$
\left\Vert T_{F}^{R}\right\Vert =\left\Vert T_{F}^{C}\right\Vert =\underset{z\in\mathbb{D}}{sup}\left(\overset{\infty}{\underset{j=1}{\sum}}\,\vert f_{j}(z)\vert^{2}\right)^{\frac{1}{2}}\le1.
$$

\noindent But, since $M(\mathcal{D}_{\alpha})\varsubsetneq H^{\infty}(\mathbb{D})$,
the point wise upperbound hypothesis will not be sufficient to conclude
that $M_{F}^{R}$ and $M_{F}^{C}$ are bounded on weighted Dirichlet space.
However, $\left\Vert M_{F}^{R}\right\Vert \leq\sqrt{10}\left\Vert M_{F}^{C}\right\Vert $ (see [KT]).
Thus, we will replace the natural normalization that $F(z)\, F(z)^{\star}\leq1$
for all $z\in\mathbb{D}$ by the stronger condition that $\left\Vert M_{F}^{C}\right\Vert \leq1.$

\bigskip{}
Then we have the following theorem: 
\begin{theorem} Let $H$,$\{f_{j}\}_{j=1}^{\infty}\subset\mathcal{M}(\mathcal{D}_{\alpha})$.
Assume that
\begin{align*}
 & (\mathrm{a})\;\Vert M_{F}^{C}\Vert\le1\\
\text{and }\;\; & (\mathrm{b})\;|H(z)|\le\sqrt{\sum_{j=1}^{\infty}\,|f_{j}(z)|^{2}}\;\text{ for all }z\in\mathbb{D}.
\end{align*}

Then there exists $K(\alpha)< \infty$ and there exists $\{g_{j}\}_{j=1}^{\infty}\subset\mathcal{M}(\mathcal{D}_{\alpha})$
with
\begin{align*}
 & \;\Vert M_{G}^{C}\Vert \leq K(\alpha)\\
\text{and }\;\; & \; F\, G^{T}=H^{3}.
\end{align*}
 \end{theorem}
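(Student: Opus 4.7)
The plan is to adapt the Koszul / $\bar\partial$-correction scheme used by Banjade and Trent for the (unweighted) Dirichlet space in [BT] to the weighted setting, substituting the weighted singular-integral bound (Lemma 3) and the weighted Beurling transform bound (Lemma 4) for the unweighted Calder\'on--Zygmund estimates used there. I expect the argument to divide naturally into three stages.

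First, on $\{\Vert F\Vert>0\}$ set $\phi_{j}(z) = \overline{f_{j}(z)}\,H(z)^{2}/\Vert F(z)\Vert^{2}$ and extend by $0$ on $\{F=0\}$; hypothesis (b) forces $H$ to vanish there, so this is consistent. Then $\sum_{j} f_{j}\phi_{j}\equiv H^{2}$ on $\mathbb{D}$ and $|\phi_{j}|\le|H|$. The $\phi_{j}$ are smooth off the zero set of $F$ but not holomorphic. Next, seek the desired multipliers in Koszul form $g_{j} = H\,\phi_{j} + \sum_{k} c_{jk}\,f_{k}$ with $c_{jk} = -c_{kj}$; antisymmetry makes $\sum_{j,k} f_{j}\,c_{jk}\,f_{k} = 0$, so $\sum_{j} f_{j}\,g_{j} = H^{3}$ automatically, regardless of the choice of $c_{jk}$. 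Holomorphicity of $g_{j}$ is equivalent to $\sum_{k}(\bar\partial c_{jk})f_{k} = -H\,\bar\partial\phi_{j}$, and a direct computation shows that the antisymmetric ansatz
\[
\bar\partial c_{jk} = H^{3}\,\frac{\overline{f_{j}}\,\overline{f_{k}'} - \overline{f_{j}'}\,\overline{f_{k}}}{\Vert F\Vert^{4}}
\]
satisfies this equation. One then produces the $c_{jk}$ themselves via a Cauchy-transform right inverse of $\bar\partial$.

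The third stage, which I expect to be the bulk of the work, is to show that the resulting $g_{j}$ lie in $\mathcal{M}(\mathcal{D}_{\alpha})$ with $\Vert M_{G}^{C}\Vert \le K(\alpha)$. By the equivalent area norm for $\mathcal{D}_{\alpha}$ this reduces to a weighted $L^{2}$-estimate on $(g_{j}\,h)'$ against $dA_{\alpha}$ for arbitrary $h\in\mathcal{D}_{\alpha}$, together with a boundary piece. The contribution from $H\phi_{j}$ is controlled by hypothesis (a) together with $|H|\le\Vert F\Vert$; the contribution from the Cauchy-transform solutions of the $\bar\partial$-system reduces, by integration by parts in the manner standard for these arguments, to weighted $L^{2}$-boundedness of a singular integral operator and of the Beurling transform---precisely Lemmas 3 and 4. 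The complete Nevanlinna--Pick property of $\mathcal{D}_{\alpha}$ is then invoked, as in [BT], to recombine the correction terms into bona fide multipliers with the advertised column norm. The main obstacle is this weighted Calder\'on--Zygmund analysis: in [BT] the analogous estimates are unweighted and classical, whereas here the weight $(1-|z|^{2})^{1-\alpha}$ changes the situation substantially and makes Lemmas 3 and 4 the serious technical new input; once those are in place, the [BT] bookkeeping transfers with essentially notational changes and $K(\alpha)$ emerges in terms of the operator norms appearing in those lemmas.
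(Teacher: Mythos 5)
Your overall strategy is the one the paper actually follows: the Koszul-type solution $\underline{u}=\frac{F^{\star}H^{3}h}{FF^{\star}}-Q\,\widehat{(\cdot)}$ built from Trent's antisymmetric matrix $Q$ (your $c_{jk}$ ansatz, including the formula for $\bar\partial c_{jk}$, is exactly what $Q^{\star}F'^{\star}H^{3}/(FF^{\star})^{2}$ encodes), with Lemmas 3 and 4 supplying the weighted Calder\'on--Zygmund input and the complete Nevanlinna--Pick property closing the argument. So the skeleton is right. But there is one genuine structural gap in how you have arranged the pieces, and it is not cosmetic.

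You propose to build \emph{fixed} functions $c_{jk}$ by applying the Cauchy transform to the $h$-free data $H^{3}(\overline{f_{j}}\,\overline{f_{k}'}-\overline{f_{j}'}\,\overline{f_{k}})/\Vert F\Vert^{4}$, and then to prove that the resulting $g_{j}$ are multipliers by estimating $(g_{j}h)'$ for arbitrary $h$. The estimates that are actually available (Lemma 3 for the operator $T$, Lemma 4 for the Beurling transform, and the harmonic-extension bounds from [KT]) control $\bigl\Vert Q\,\widehat{(\rho\,h)}\bigr\Vert_{\mathcal{D}_{\alpha}}$, i.e.\ the Cauchy transform applied to the \emph{product} with $h$ --- not $\Vert \widehat{\rho}\cdot h\Vert_{\mathcal{D}_{\alpha}}$ for a fixed $\widehat{\rho}$. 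The map $h\mapsto Q\,\widehat{(\rho h)}$ is not multiplication by any fixed symbol, so what this argument yields is only the $h$-by-$h$ statement: for each $h$ there is some $\underline{u}_{h}$ with $M_{F}^{R}\underline{u}_{h}=H^{3}h$ and $\Vert\underline{u}_{h}\Vert_{\mathcal{D}_{\alpha}}\le K(\alpha)\Vert h\Vert_{\mathcal{D}_{\alpha}}$, equivalently the operator inequality $M_{H^{3}}M_{H^{3}}^{\star}\le K(\alpha)^{2}M_{F}^{R}M_{F}^{\star R}$. It is precisely here that the complete Nevanlinna--Pick property plus commutant lifting must be invoked to manufacture an honest $G\in\mathcal{M}_{l^{2}}(\mathcal{D}_{\alpha})$ with $FG^{T}=H^{3}$ and $\Vert M_{G}^{C}\Vert\le K(\alpha)$. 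In your sketch the CNP step appears \emph{after} you have (purportedly) already shown the $g_{j}$ are multipliers, where it would have nothing left to do; as written, the multiplier bound for the fixed $c_{jk}$ is unsupported, and proving it directly is a substantially harder problem than the one the paper solves. You should put $h$ inside the Cauchy transform from the outset and let commutant lifting produce $G$. Two smaller omissions: the argument is first run for $F,H$ analytic on a larger disk (after normalizing $H(0)\ne 0$), and a WOT-compactness limiting argument is needed at the end to remove that smoothness; and the term actually requiring Lemma 3 is the discrepancy $Q'(\widehat{w}-\widetilde{\widehat{w}})$ between the Cauchy transform and the harmonic extension of its boundary values, which is where the kernel $\frac{1-|z|^{2}}{(u-z)(1-u\bar z)}$ comes from.
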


\medskip{}
Of course, it should be noted that for only a finite number of multipliers,
$\{f_{j}\}$, condition (a) of Theorem 1 can always be assumed, so
we have the exact analogue of Wolff's theorem in the finite case.

\section{Outline of the proof of Theorem 1}
In this section, we will collect some known results and also prove required lemmas and then give an outline of the proof of Theorem 1.\\
 \indent  Assume that $F\in\mathcal{M}_{l^{2}}(\mathcal{D}_{\alpha})$
and $H\in\mathcal{M}(\mathcal{D}_{\alpha})$ satisfy the hypotheses
(a) and (b) of Theorem 1. Then we show that there exists a constant
$K(\alpha)<\infty,$ so that
\begin{equation}
M_{H^{3}}\, M_{H^{3}}^{\star}\leq K(\alpha)^{2}M_{F}^{R}\, M_{F}^{\star R}.
\end{equation}

Given $(2)$, a commutant lifting theorem argument as it appears in,
for example, Trent {[}Tr2{]} completes the proof by providing a $G\in\mathcal{M}_{l^{2}}(\mathcal{D}_{\alpha})$,
so that $\Vert M_{G}^{C}\Vert\leq K(\alpha)$ and $F\, G^{T}=H^{3}$.\\

But (2) is equivalent to the following: there exists a constant
 $K(\alpha)<\infty$
so that, for any $h\in\mathcal{D}_{\alpha}$, there exists \,$\underline{u}_{h}\in\overset{\infty}{\underset{1}{\oplus}}\,\mathcal{D}_{\alpha}$\,
such that
\begin{align}
(\mathrm{i}) & \,\,\, M_{F}^{R}(\underline{u}_{h})=H^{3}h\quad\text{ and}\notag\\
(\mathrm{ii}) & \,\,\left\Vert \underline{u}_{h}\right\Vert _{\mathcal{D_{\alpha}}}\leq K(\alpha)\left\Vert h\right\Vert _{\mathcal{D_{\alpha}}}.
\end{align}

\medskip{}
Hence, our goal is to show that (3) follows from (a) and (b). For
this we need a series of lemmas.
\begin{lemma}
Let $\left\{ c_{j}\right\} _{j=1}^{\infty}\in l^{2}$ and $C=\left(c_{1},c_{2},...\right)\in B\left(l^{2},\mathbb{C}\right).$
Then there exists $Q$ such that the entries of $Q$ are either $0$
or $\pm c_{j}$ for some $j$ and $CC^{\star}I-C^{\star}C=QQ^{\star}.$
Also, range of $Q$ $=$ kernel of $C.$
\end{lemma}
We will apply this lemma in our case with $C=F(z)$ for each $z\in\mathbb{D}$,
when $F(z)\neq0$. A proof of this lemma can be found in Trent {[}Tr2{]}. We can see in the proof that $Q(z)$ is analytic in $z$ on $\mathbb{D}$.

\medskip{}
Given condition (b) of Theorem 1 for all $z\in\mathbb{D}$, $F\in\mathcal{M}_{l^{2}}(\mathcal{D}_{\alpha})$
and $H\in\mathcal{M}(\mathcal{D}_{\alpha})$ with $H$ being not identically
zero, we lose no generality assuming that $H(0)\neq0.$ If $H(0)=0,$
but $H(a)\neq0,$ let $\beta(z)=\frac{a-z}{1-\bar{a}\, z}$ for $z\in\mathbb{D}$.
Then since (b) holds for all $z\in\mathbb{D}$, it holds for $\beta(z)$.
So we may replace $H$ and $F$ by $Ho\beta$ and $Fo\beta,$ respectively.
If we prove our theorem for $Ho\beta$ and $Fo\beta$, then there
exists $G\in\mathcal{M}_{l^{2}}(\mathcal{D}_{\alpha}$) so that $\left(Fo\beta\right)G=Ho\beta$
and hence $F(Go\beta^{-1})=H\,$ and $Go\beta^{-1}\in\mathcal{M}_{l^{2}}(\mathcal{D}_{\alpha}),$
and we are done. Thus, we may assume that $H(0)\neq0$ in (b), so
$\Vert F(0)\Vert_{2}\neq0.$ This normalization will let us apply
some relevant lemmas from {[}Tr1{]}.

\medskip{}
It suffices to establish (i) and (ii) for any dense set of functions
in $\mathcal{D}_{\alpha}$, so we will use polynomials. First, we
will assume $F$ and $H$ are analytic on $\mathbb{D}{}_{1+\epsilon}(0)$.
In this case, we write the most general solution of the pointwise
problem on $\overline{\mathbb{D}}$ and find an analytic solution
with uniform bounds. Then we remove the smoothness hypotheses on $F$
and $H$.

\medskip{}
For a polynomial, $h$, we take
$$
\underline{u}_{h}(z)=F(z)^{\star}\left(F(z)F(z)^{\star}\right)^{-1}H^{3}\, h-Q(z)\underline{k}(z),\mbox{where }\underline{k}(z)\in l^{2}\mbox{ for }z\in\mathbb{D}.$$

 We have to find $\underline{k}(z)$ so that $\underline{u}_{h}\in\overset{\infty}{\underset{1}{\oplus}}\,\mathcal{D}_{\alpha}$.
Thus we want $\bar{\partial_{z}}\,\underline{u}_{h}=0$ in $\mathbb{D}.$

\medskip{}
Therefore, we will try
$$
\underline{u}_{h}=\frac{F^{\star}H^{3}h}{FF^{\star}}-Q\,\,\widehat{W},
$$
 where $$W= {\left(\frac{Q^{\star}F^{'\star}H^{3}h}{\left(FF^{\star}\right)^{2}}\right)}$$  and  $\widehat{W}$ is the Cauchy transform of $W$ on $\mathbb{D}$.
Note that for $k$ smooth on $\overline{\mathbb{D}}$ and $z\in\mathbb{D}$,
$$
\underline{\widehat{k}}(z)=-\frac{1}{\pi}\int_{D}\frac{\underline{k}(w)}{w-z}\, dA(w)\,\text{ and }\;\overline{\partial}\,\underline{\widehat{k}}(z)=k(z)\,\text{ for }z\in\mathbb{D}.
$$
 See {[}A{]} for background on the Cauchy transform.

\medskip
Then it's clear that $M_{F}^{R}\left(\underline{u}_{h}\right)=H^{3}h$
and $\underline{u}_{h}$ is analytic. Hence, we will be done in the
smooth case if we are able to find $K(\alpha)<\infty$, only depending
on $\alpha$ and thus independent of the polynomial, $h$, such that
\begin{equation}
\left\Vert \underline{u}_{h}\right\Vert _{\mathcal{D}_{\alpha}}\leq K(\alpha)\left\Vert h\right\Vert _{\mathcal{D}_{\alpha}}
\end{equation}




\begin{lemma}
Let the operator T be defined on $L^{2}\left(\mathbb{D}, dA_{\alpha}\right)$
by
$$
\left(Tf\right)(z)=\int_{D}\mbox{\ensuremath{\frac{f(u)}{(u-z)(1-u\bar{z})}}}dA_{\alpha},\,
$$
 for $z \in\mathbb{D}$ and $f\in L^{2}(\mathbb{D}, dA_{\alpha})$.
Then
$$
\vert\vert Tf\vert\vert_{A_{\alpha}}^{2}  \leq4\pi^{2}C_{\alpha}^{2}\vert\vert f\vert\vert_{A_{\alpha}}^{2},
$$
where $ C_{\alpha}=\frac{8}{{\alpha}^2}$.
\end{lemma}

\begin{proof}
To show that the singular integral operator, $T$, is bounded on $L^{2}(\mathbb{D},dA_{\alpha})$,
we apply Zygmund's method of rotations {[}Z{]} and apply Schur's lemma an
infinite number of times.

\medskip{}
Let $f(z)=\overset{\infty}{\underset{j=0}{\sum}}\,\overset{\infty}{\underset{k=0}{\sum}}\, a_{jk}z^{j}\bar{z}^{k}$,
where $a_{ij}=0$ except for a finite number of terms. For $z=r\, e^{i\theta}$, we relabel to get
$$
f(r\, e^{i\theta})=\overset{\infty}{\underset{l=-\infty}{\sum}}f_{l}(r)\, e^{il\theta},\text{ where }\, f_{l}(r)=\overset{\infty}{\underset{k=0}{\sum}}a_{l+k\, k\,}r^{l+2k}.
$$
 Then
$$
\Vert f\Vert_{A_{\alpha}}^{2}=\overset{\infty}{\underset{l=-\infty}{\sum}}\Vert f_{l}(r)\Vert_{L_{\alpha}^{2}\left[0,1\right]}^2,
$$
 where the measure on $L_{\alpha}^{2}\left[0,1\right]$ is {}``$\left(1-r^{2}\right)^{1-\alpha}rdr$''.

\medskip{}
Now computing as in [BT], we deduce that
$$
\left(Tf\right)\left(se^{it}\right)=2\pi\underset{l=-\infty}{\overset{\infty}{\sum}}e^{i(l-1)t}\left(T_{l}f_{l}\right)(s),
$$

$$
\mbox{for \; \ensuremath{\left(T_{l}f_{l}\right)(s)=\begin{cases}
\begin{array}{l}
-(\underset{n=0}{\overset{-l}{\sum}}s^{2n})\int_{0}^{1}\chi_{\left(0,s\right)}(r)\left(\frac{r}{s}\right)^{1-l}f_{l}(r)\, dr\\
\quad+\frac{1}{1-s^{2}}\int_{0}^{1}\chi_{(s,1)}(r)\,\left(rs\right)^{1-l}f_{l}(r)\, dr\quad\text{for }\, l\leq0
\end{array}\\
\;\frac{1}{1-s^{2}}\int_{0}^{1}\chi_{(s,1)}(r)\left(\frac{s}{r}\right)^{l-1}f_{0}(r)\, rdr\quad\quad\;\;\text{for }\, l>0.
\end{cases}}}
$$

\medskip{}
By our construction,
$$
\Vert Tf\Vert_{A_{\alpha}}^{2}=4\,\pi^{2}\underset{l=-\infty}{\overset{\infty}{\sum}}\vert\vert T_{l}f_{l}\vert\vert_{L_{\alpha}^{2}\left[0,1\right]}^{2},
$$
 where the measure on $L^{2}\left[0,1\right]$ is {}``$\left(1-r^{2}\right)^{1-\alpha}rdr$''.
Thus, to prove our lemma it suffices to prove that
\begin{equation}
\underset{l}{sup}\,\Vert T_{l}\Vert_{B\left(L_{\alpha}^2\left[ 0,1 \right]\right)}\, \leq \, C_{\alpha}<\infty.\notag
\end{equation}

\medskip{}
\noindent
To illustrate the technique, we show a detailed estimate for
$\Vert T_{0}\Vert_{B\left(L_{\alpha}^2\left[ 0,1 \right]\right)}$. The other cases follow similarly.

\medskip
Now

\begin{align*}
 & \int_{0}^{1}\left|T_{0}f_{0}(se^{it})\right|^{2}(1-s^{2})^{1-\alpha}sds\\
  & =2\,\int_{0}^{1}\int_{0}^{1}f_{0}(u)f_{0}(v)\left(\int_{max\left\{ u,v\right\} }^{1}\frac{(1-s^{2})^{1-\alpha}ds}{s}\right)udu\, vdv\\
 & \quad+2\int_{0}^{1}\int_{0}^{1}\, f_{0}(x)\, f_{0}(y)\left[\int_{0}^{min\left\{ x,\, y\right\} }\,\mbox{\ensuremath{\frac{s^{2}\left(1-s^{2}\right)^{1-\alpha}}{(1-s^{2})^{2}}sds}}\right]xdx\, ydy.
\end{align*}

\medskip{} \noindent
\textbf{Claim (I):}

\begin{align*}
&\int_{0}^{1}\int_{0}^{1}f_{0}(u)f_{0}(v)\left(\int_{max\left\{ u,v\right\} }^{1}\frac{(1-s^{2})^{1-\alpha}ds}{s}\right)udu\, vdv\\
& \qquad \qquad \leq\frac{25}{16}\int_{0}^{1}\left|f_{0}(u)\right|^{2}\left(1-u^{2}\right)^{1-\alpha}u\, du.
\end{align*}

\medskip{}
We have
\begin{align*}
 & \int_{0}^{1}\int_{0}^{1}f_{0}(u)f_{0}(v)\left(\int_{max\left\{ u,v\right\} }^{1}\frac{(1-s^{2})^{1-\alpha}ds}{s}\right)udu\, vdv\\
  & \quad \leq \int_{0}^{1}\int_{0}^{1}f_{0}(u)\, f_{0}(v)\left[\frac{\left(1-max(u^{2},v^{2})\right)^{1-\alpha}}{\left(1-u^{2}\right)^{1-\alpha}\left(1-v^{2}\right)^{1-\alpha}}ln\left(\frac{1}{max\left\{ u,v\right\} }\right)\right]\\
  & \qquad \qquad \qquad \qquad \quad \qquad \left(1-u^{2}\right)^{1-\alpha}\left(1-v^{2}\right)^{1-\alpha}udu\, vdv.
\end{align*}

We apply Schur's Test with $p(u)=1$.

\begin{align*}
&\int_{0}^{v}\left[\frac{\left(1-v^{2}\right)^{1-\alpha}}{\left(1-u^{2}\right)^{1-\alpha}\left(1-v^{2}\right)^{1-\alpha}}ln\left(\frac{1}{v}\right)\right]\left(1-u^{2}\right)^{1-\alpha}udu\\
 & \qquad \qquad =\frac{1}{2}\, ln\left(\frac{1}{v^{2}}\right)\frac{v^{2}}{2}
  \leq\frac{1}{4}.
\end{align*}
\medskip{}
Similarly, we get $\int_{v}^{1}\left[\frac{\left(1-u^{2}\right)^{1-\alpha}}{\left(1-u^{2}\right)^{1-\alpha}\left(1-v^{2}\right)^{1-\alpha}}ln\left(\frac{1}{u}\right)\right]\left(1-u^{2}\right)^{1-\alpha}u du\leq1$.

\medskip{}
Therefore,
\begin{align*}
& \int_{0}^{1}\left[\frac{\left(1-max(u^{2},v^{2})\right)^{1-\alpha}}{\left(1-u^{2}\right)^{1-\alpha}\left(1-v^{2}\right)^{1-\alpha}}ln\left(\frac{1}{max\left\{ u,v\right\} }\right)\right]p(u)\left(1-u^{2}\right)^{1-\alpha}udu \\
& \qquad \qquad \leq \frac{5}{4}\, p(v).
\end{align*}

\medskip \noindent
\textbf{Claim (II):}\\
$$
\begin{aligned} & \int_{0}^{1}\int_{0}^{1}\, f_{0}(x)\, f_{0}(y)\left[\int_{0}^{min\left\{ x,\, y\right\} }\,\mbox{\ensuremath{\frac{s^{2}\left(1-s^{2}\right)^{1-\alpha}}{(1-s^{2})^{2}}sds}}\right]xdx\, ydy\\
 & \quad \leq \frac{4}{\alpha^2} \int_{0}^{1}\left|f_{0}(x)\right|^{2}\left(1-x^{2}\right)^{1-\alpha}xdx.
\end{aligned}
$$

We have
\begin{align*}
 & \int_{0}^{1}\int_{0}^{1}\, f_{0}(x)\, f_{0}(y)\left[\int_{0}^{min\left\{ x,\, y\right\} }\,\mbox{\ensuremath{\frac{s^{2}\left(1-s^{2}\right)^{1-\alpha}}{(1-s^{2})^{2}}sds}}\right]xdx\, ydy\\
 & =\int_{0}^{1}\int_{0}^{1}\, f_{0}(x)\, f_{0}(y)\left[\frac{1}{2}\int_{0}^{min\left\{ x^{2},y^{2}\right\} }\frac{s}{(1-s)^{1+\alpha}}ds\right]xdx\, ydy\\
 & \leq\int_{0}^{1}\int_{0}^{1}\, f_{0}(x)\, f_{0}(y)\left[\frac{1}{2\alpha}\frac{min\left\{ x^{2},y^{2}\right\} }{\left(1-min\left\{ x^{2},y^{2}\right\} \right)^{\alpha}}\right]xdx\, ydy.
\end{align*}

\medskip{}

For this term, we take $p(x)=\frac{1}{\left(1-x^{2}\right)^{\beta}}$,
 where $\beta=1-\frac{\alpha}{2}$.
Then, calculating, we get that

\[
\int_{0}^{y}\,\frac{1}{2\alpha}\,\frac{x^{2}}{\left(1-x^{2}\right)^{\alpha+\beta}}\,\frac{1}{\left(1-y^{2}\right)^{1-\alpha}}\, xdx\leq\frac{1}{4\alpha\left(\beta+\alpha-1\right)}\frac{1}{\left(1-y^{2}\right)^{\beta}}.
\]

\medskip{}
Similarly,

\[
\int_{y}^{1}\,\frac{1}{2\alpha}\,\frac{y^{2}}{\left(1-y^{2}\right)^{\alpha}}\,\frac{1}{\left(1-y^{2}\right)^{1-\alpha}}\frac{1}{\left(1-x^{2}\right)^{\beta}}\, xdx\leq\frac{1}{4\alpha\left(\beta-1\right)}\frac{1}{\left(1-y^{2}\right)^{\beta}}.
\]

\medskip{}

Therefore,
\begin{align*}
 & \int_{0}^{1}\left[\frac{1}{2\alpha}\frac{min\left\{x^{2},y^{2}\right\}}{\left(1-min\left\{x^{2},y^{2}\right\}\right)^{\alpha}\left(1-x^{2}\right)^{1-\alpha}\left(1-y^{2}\right)^{1-\alpha}}\right] p(x)\left(1-x^{2}\right)^{1-\alpha}xdx\\
 & \qquad =\left(\frac{1}{4\alpha\left(\beta+\alpha-1\right)}+\frac{1}{4\alpha\left(1-\beta\right)}\right)p(y)\\
 & \qquad =\frac{1}{\left(4\beta+\alpha-1\right)\left(1-\beta\right)}\, p(y)=\frac{1}{\alpha^2}\, p(y).
\end{align*}

\medskip{}

Hence,
\[
\int_{0}^{1}\left|T_{0}f_{0}(s)\right|^{2}(1-s^{2})^{1-\alpha}sds  \leq C_{\alpha_{0}}^{2}\int_{0}^{1}\left|f_{0}(s)\right|^{2}\left(1-s^{2}\right)^{1-\alpha}sds,
\]
where $C_{\alpha_{0}}=\left[\frac{5}{2}+\frac{2}{\alpha^2}\right]\leq \frac{5}{\alpha^2}$.

\medskip{}

Applying Schur's test for $l>1$ with $p(x)=\frac{1}{\left(1-x^{2}\right)^{\beta}},\,\beta=1-\frac{\alpha}{2}$,
we get the estimate $C_{l}\leq\frac{5}{\alpha^2}$,
independent of $l$. Similarly, for $l<0$ with $p(x)=1$ and $p(x)=\frac{1}{\left(1-x^{2}\right)^{\beta}},\,$
for each of the two terms, respectively, we get the estimate $C_{l}\leq 6+\frac{2}{\alpha^2}$,
independent of $l$ . Thus we conclude that
\[
\underset{l}{\sup}\,\Vert T_{l}\Vert_{B\left(L_{\alpha}^{2}\left[0,1\right]\right)}\leq \frac{8}{\alpha^2}.
\]
This finishes the proof of the Lemma. \end{proof}

\medskip

A classical treatment of the Beurling transform can be found in Zygmund [Z]. For our purposes, we define the Beurling transform formally by

\[
\mathcal{B}(\phi)=\partial_{z}(\widehat{\phi} ),
\] 
where $\phi$ is in $C^1(\overline{\mathbb{D}})$
and $\widehat{\phi}$ is the Cauchy transform of $\phi$ on $\mathbb{D}$.

\medskip
\begin{lemma}
Let~$\mathcal{B}$ denote the Beurling transform. Then

\[
\vert\vert\mathcal{B}\left(f\right)\vert\vert_{A_{\alpha}}\leq \frac{23}{\alpha}\,\vert\vert f\vert\vert_{A_{\alpha}},\,\,\mbox f\in L^{2}\left(\mathrm{\mathbb{D},\: dA_{\alpha}}\right).
\]
\end{lemma}

\begin{proof}
To show that the Beurling transform, $\mathcal{B}$, is bounded on
$L^{2}\left(\mathbb{D,\: dA_{\alpha}}\right),$ we again apply Zygmund's
method of rotations {[}Z{]} and apply Schur's lemma.

As in Lemma 3, we take
\[
f(r\, e^{i\theta})=\overset{\infty}{\underset{l=-\infty}{\sum}}f_{l}(r)\, e^{il\theta},\text{ where }\, f_{l}(r)=\overset{\infty}{\underset{k=0}{\sum}}a_{l+k\, k\,}r^{l+2k}.
\]
 Then
\[
\Vert f\Vert_{A_{\alpha}}^{2}=\overset{\infty}{\underset{l=-\infty}{\sum}}\Vert f_{l}(r)\Vert_{L_{\alpha}^{2}\left[0,1\right]}^2,
\]
 where the measure on $L_{\alpha}^{2}\left[0,1\right]$ is {}``$\left(1-r^{2}\right)^{1-\alpha}rdr$''.

\medskip{}

Now
\begin{align*}
\overset{}{\widehat{f}(w)} & \overset{}{=-\frac{2}{2\pi}}\int_{D}\frac{f(z)}{z-w}\, dA(z)\\
  & =2\overset{\infty}{\underset{l=-\infty}{\sum}}\overset{\infty}{\underset{n=0}{\sum}}\int_{0}^{2\pi}\int_{0}^{\vert w\vert}\,\,\frac{f_{l}(r)\,\, e^{i(l+n)\,\theta}}{w^{n+1}}\, r^{n+1}drd\sigma(\theta)\\
 & \quad-2\overset{\infty}{\underset{l=-\infty}{\sum}}\overset{\infty}{\underset{n=0}{\sum}}\int_{0}^{2\pi}\int_{\vert w\vert}^{1}\,\frac{f_{l}(r)\, e^{i\,(l-1-n)\,\theta}w^{n}}{r^{n}}drd\sigma(\theta).\,\,\,\,\,\,\,\,\,\,\, & (\star)
\end{align*}

\medskip{}
If we take $l=0$ in $(\star)$, we get that

\[
\widehat{f_{0}}(w)=\frac{2}{w}\int_{0}^{\vert w\vert}\, f_{0}(r)\, rdr.
\]

Therefore,
\begin{align*}
\partial\widehat{f_{0}}(w) & =\frac{-2}{w^{2}}\int_{0}^{\vert w\vert}\, f_{0}(r)\, rdr+\frac{2}{w}\, f_{0}\left(\vert w\vert\right)\vert w\vert\frac{\partial(\vert w\vert)}{\partial w}\\
 & =\frac{-2}{w^{2}}\int_{0}^{\vert w\vert}\, f_{0}(r)\, rdr+\frac{\overline{w}}{w}\, f_{0}\left(\vert w\vert\right),
 \end{align*}
 since\,\,$\overline{w}=\frac{\partial\vert w\vert^{2}}{\partial w}=2\vert w\vert\frac{\partial\vert w\vert}{\partial w},\,\,\frac{\partial\vert w\vert}{\partial w}=\frac{\overline{w}}{2\vert w\vert}$.

\medskip{}
Thus,

\[
\mathcal{B}f_{0}(se^{it})=\partial\widehat{f_{0}}(se^{it})=e^{-2it}\left[\frac{-2}{s^{2}}\int_{0}^{s}\, f_{0}(r)\, rdr+f_{0}\left(s\right)\right].
\]

\medskip{}

Similarly, a computation shows that
\[
\mathcal{B}(f)(se^{it})=\overset{\infty}{\underset{l=-\infty}{\sum}}e^{i(l-2)t}\mathcal{B}_{l}f_{l}(s),
\]

\begin{align*}
\mbox{for }\;\mathcal{B}_{l}f_{l}(s) & =\begin{cases}
\begin{aligned}\frac{-2}{s^{2}}\int_{0}^{s}\, f_{0}(r)\, rdr+f_{0}\left(s\right) & \mbox{\,\,\,\ for \ensuremath{l=0}}\\
\\
-2\,(l-1)s^{l-2}\int_{s}^{1}\,\frac{f_{l}(r)}{r^{l-1}}dr-f_{l}(s) & \mbox{\,\,\,\ for \ensuremath{l\geq1}}\\
\\
\mbox{\ensuremath{-2(1-l)s^{l-2}\int_{0}^{s}f_{l}(r)r^{1-l}dr+f_{l}(s})} & \mbox{\,\,\,\ for \ensuremath{l<0}}.
\end{aligned}
\end{cases}
\end{align*}

\medskip{}
Thus,
\[
\vert\vert\mathcal{B}f\vert\vert_{A_{\alpha}}^{2}=\overset{\infty}{\underset{l=-\infty}{\sum}}\vert\vert\mathcal{B}_{l}f_{l}\vert\vert_{L_{\alpha}^{2}\left[0,1\right],}^{2}
\]
where the measure on $L_{\alpha}^{2}\left[0,1\right]$ is {}``$\left(1-r^{2}\right)^{1-\alpha}rdr$''.

\bigskip{}\noindent
\textbf{Claim:}
\[
\underset{l}{\sup}\vert\vert\mathcal{B}_{l}\vert\vert_{B\left(L_{\alpha}^{2}\left[0,1\right]\right)}\leq \frac{23}{\alpha}<\infty.
\]

\smallskip{}
Without loss of generality we may assume that $f_{l}(s)\geq0\,\,\mbox{for all }\, l.$
For $l<2$, applying Schur's test with $p(u)=1$ or $p(u)=\frac{1}{\sqrt{u}}$, we get that $\vert\vert\mathcal{B}_{l}\vert\vert_{B\left(L^{2}\left[0,1\right]\right)}\leq 7$. The main cases occur for $l\geq 2$. So let $l \geq 2$ be fixed. Then

\begin{align*}
\vert\vert\mathcal{B}_{l}f_{l}\vert\vert_{L_{\alpha}^{2}\left[0,1\right]} & \leq 2 \left(\int_{0}^{1}\vert-\,(l-1)s^{l-2}\int_{s}^{1}\,\frac{f_{l}(r)}{r^{l-1}}dr\vert^{2}(1-s^{2})^{1-\alpha}sds \right)^{\frac{1}{2}}\\
& \qquad \qquad +\vert\vert f_{l}\vert\vert_{L_{\alpha}^{2}\left[0,1\right]}
\end{align*}
Now,
\begin{align*}
& (l-1)^{2}\int_{0}^{1}s^{2(l-2)}\vert\int_{0}^{1}\chi_{(s,1)}(r)\,\frac{f_{l}(r)}{r^{l-1}}dr\vert^{2}(1-s^{2})^{1-\alpha}sds\\
& \quad =\int_{0}^{1}\int_{0}^{1}f_{l}(u)f_{l}(v)\left[\left(l-1\right)^{2}\frac{1}{u^{l}}\,\frac{1}{v^{l}}\frac{\int_{0}^{min\left\{u,v\right\}}s^{2(l-2)}(1-s^{2})^{1-\alpha}sds}
 {(1-u^{2})^{1-\alpha}(1-v^{2})^{1-\alpha}}\right]\\
& \qquad \qquad \qquad\qquad \quad (1-u^{2})^{1-\alpha}(1-v^{2})^{1-\alpha}udu\, vdv.
\end{align*}

\medskip{}

Applying Schur's test with $p(u)=\frac{1}{\left(1-u^{2}\right)^{1-\alpha}}$, then
it's sufficient to show that

\[
\int_{0}^{1}\left[\left(l-1\right)^{2}\frac{1}{u^{l}}\,\frac{\int_{0}^{min\left\{ u,v\right\} }s^{2l-3}(1-s^{2})^{1-\alpha}ds}{(1-u^{2})^{1-\alpha}}\right]udu\leq C_{l}\, v^{l}.
\]

\medskip{}
Since $(1+s)^{1-\alpha}\leq2$
and $\frac{1}{2}\leq\frac{1}{(1+u)^{1-\alpha}}\leq1$,
 we will be done if we are able to show

\[
\int_{0}^{1}\left[\left(l-1\right)^{2}\frac{1}{u^{l}}\,\frac{\int_{0}^{min\left\{ u,v\right\} }s^{2l-3}(1-s)^{1-\alpha}ds}{(1-u)^{1-\alpha}}\right]udu\leq C_{l}\, v^{l}.
\]

So we are trying to prove that
\begin{align*} 
 \int_{0}^{v}\left[\left(l-1\right)^{2}\frac{1}{u^{l}}\,\frac{\int_{0}^{u}s^{2l-3}(1-s)^{1-\alpha}ds}{(1-u)^{1-\alpha}}\right]udu & \leq C_{l}v^{l}\,\,\,\,\,\mbox{and }\\
 \int_{v}^{1}\left[\left(l-1\right)^{2}\frac{1}{u^{l}}\,\frac{\int_{0}^{v}s^{2l-3}(1-s)^{1-\alpha}ds}{(1-u)^{1-\alpha}}\right]udu & \leq C_{l}v^{l}.
\end{align*}

Now
\begin{align*}
 & \int_{0}^{v}\left[\left(l-1\right)^{2}\frac{1}{u^{l}}\,\frac{\int_{0}^{u}s^{2l-3}(1-s)^{1-\alpha}ds}{(1-u)^{1-\alpha}}\right]udu\\
  & \; =\int_{0}^{v}\left[\left(l-1\right)^{2}s^{2l-3}(1-s)^{1-\alpha}\int_{s}^{v}\frac{du}{u^{l-1}\,(1-u)^{1-\alpha}}\right]ds.
\end{align*}

Let $t=(1-u)^{\alpha}$ and change variables.
Then we get that 
\begin{align*}
&\int_{0}^{v}\left[\left(l-1\right)^{2}s^{2l-3}(1-s)^{1-\alpha}\int_{s}^{v}\frac{du}{u^{l-1}(1-u)^{1-\alpha}}\right]ds\\
 & =\int_{0}^{v}\left[\frac{1}{\alpha}\left(l-1\right)^{2}s^{2l-3}(1-s)^{1-\alpha}\int_{(1-v)^{\alpha}}^{(1-s)^{\alpha}}\frac{dt}{\left(1-t^{\frac{1}{\alpha}}\right)^{(l-2)+1}}\right]ds\\
 & =\int_{0}^{v}\left[\frac{1}{\alpha}\left(l-1\right)^{2}s^{2l-3}(1-s)^{1-\alpha}\underset{p=0}{\overset{\infty}{\sum}}\left(\begin{array}{c}
l-2+p\\
p
\end{array}\right)\int_{(1-v)^{\alpha}}^{(1-s)^{\alpha}}t^{\frac{p}{\alpha}}dt\right]ds\\
  & \leq\int_{0}^{v}\left[\frac{1}{\alpha}\left(l-1\right)^{2}s^{2l-3}(1-s)^{1-\alpha}\underset{p=0}{\overset{\infty}{\sum}}\frac{\left(l-2+p\right)!}{\left(l-2\right)!\, p!}\bigg[\frac{\left((1-s)^{\alpha}\right)^{\frac{p}{\alpha}+1}}{\frac{p}{\alpha}+1}\bigg]\right]ds\\
 & \leq\frac{2}{\alpha}\int_{0}^{v}\left[\left(l-1\right)s^{2l-3}(1-s)^{1-\alpha}\underset{q=1}{\overset{\infty}{\sum}}\frac{\left(l-3+q\right)!}{(l-3)!\, q!}\,\frac{(1-s)^{q}}{(1-s)^{1-\alpha}}\right]ds\\
 & = \frac{2}{\alpha}\int_{0}^{v}\left[\left(l-1\right)s^{2l-3}\left(\frac{1}{\left(1-\left(1-s\right)\right)^{l-3+1}}-1\right)\right]ds\\
 & \leq \frac{2}{\alpha}\int_{0}^{v}\left[\left(l-1\right)s^{2l-3}\left(\frac{1}{s^{l-2}}\right)\right]ds\\
 & \leq\frac{2}{\alpha}v^{l}.
\end{align*}

\medskip{}
Now consider

\begin{align*}
 & \int_{v}^{1}\left[\left(l-1\right)^{2}\frac{1}{u^{l}}\,\frac{\int_{0}^{v}s^{2l-3}(1-s)^{1-\alpha}ds}{(1-u)^{1-\alpha}}\right]udu\\
 & \; =\int_{0}^{v}\left[\left(l-1\right)^{2}s^{2l-3}(1-s)^{1-\alpha}\int_{v}^{1}\frac{du}{u^{l-1}\,(1-u)^{1-\alpha}}\right]ds.
\end{align*}

Again, change variables with $t=(1-u)^{\alpha}$.
So 
{\allowdisplaybreaks
\begin{align*}
& \int_{0}^{v}\left[\left(l-1\right)^{2}s^{2l-3}(1-s)^{1-\alpha}\int_{v}^{1}\frac{du}{u^{l-1}\,(1-u)^{1-\alpha}}\right]ds\\
 &  =\int_{0}^{v}\left[\frac{1}{\alpha}\left(l-1\right)^{2}s^{2l-3}(1-s)^{1-\alpha}\int_{0}^{(1-v)^{\alpha}}\frac{dt}{\left(1-t^{\frac{1}{\alpha}}\right)^{l-1}}\right]ds\\
  &  =\int_{0}^{v}\left[\frac{1}{\alpha}\left(l-1\right)^{2}s^{2l-3}(1-s)^{1-\alpha}\underset{p=0}{\overset{\infty}{\sum}}\left(\begin{array}{c}
l-2+p\\
p
\end{array}\right)\int_{0}^{(1-v)^{\alpha}}t^{\frac{p}{\alpha}}dt\right]ds\\
  & =\int_{0}^{v}\left[\frac{1}{\alpha}\left(l-1\right)^{2}s^{2l-3}(1-s)^{1-\alpha}\underset{p=0}{\overset{\infty}{\sum}}\frac{\left(l-3+p+1\right)!}{\left(l-2\right)(l-3)!\, p!}\left[\frac{(1-v)^{p+\alpha}}{p+1}\right]\right]ds\\
 & \leq\frac{2}{\alpha}\int_{0}^{v}\left[\left(l-1\right)s^{2l-3}(1-s)^{1-\alpha}\underset{q=1}{\overset{\infty}{\sum}}\frac{\left(l-3+q\right)!}{(l-3)!\, q!}\,\frac{(1-v)^{q}}{(1-v)^{1-\alpha}}\right]ds\\
 & = \frac{2}{\alpha}\int_{0}^{v}\left[\left(l-1\right)s^{2l-3}(1-s)^{1-\alpha}\big(\frac{1}{\left(1-\left(1-v\right)\right)^{l-3+1}}-1\big)\frac{1}{(1-v)^{1-\alpha}}\right]ds\\
 & \leq \frac{2}{\alpha}\int_{0}^{v}\left[\left(l-1\right)s^{2l-3}(1-s)^{1-\alpha}\left(\frac{1-v^{l-2}}{v^{l-2}}\right)\frac{(1-v)^{\alpha}}{(1-v)}\right]ds\\
 & \leq\frac{2}{\alpha}\int_{0}^{v}\left[\left(l-1\right)s^{2l-3}(1-s)^{1-\alpha}(1-s)^{\alpha}\left(\frac{1-v^{l-2}}{1-v}\right)\frac{1}{v^{l-2}}\right]ds\\
 & =\frac{2(l-1)}{\alpha}\int_{0}^{v}\left[\left(s^{2l-3}-s^{2l-2}\right)\left(\frac{1-v^{l-2}}{1-v}\right)\frac{1}{v^{l-2}}\right]ds\\
 &  =\frac{2(l-1)}{\alpha}\left[\left(\frac{v^{2l-2}}{2l-2}-\frac{v^{2l-1}}{2l-1}\right)\left(\frac{1-v^{l-2}}{1-v}\right)\frac{1}{v^{l-2}}\right]\\
  & =\frac{2(l-1)v^{l}}{\alpha}\left[\left(\frac{(1-v)}{2l-2}+v\left(\frac{1}{2l-2}-\frac{1}{2l-1}\right)\right)\left(\frac{1-v^{l-2}}{1-v}\right)\right]\\
  & \leq\frac{1}{\alpha}\,\, v^{l}+\frac{2(l-1)v^{l+1}}{\alpha}\left[\left(\frac{1}{2(l-1)(2l-1)}\right)\left(\frac{1-v^{l-2}}{1-v}\right)\right]\\
 & =\frac{1}{\alpha}\,\, v^{l}+\frac{v^{l+1}}{\alpha}\left[\frac{1}{2l-1}\left(\frac{1-v^{l-2}}{1-v}\right)\right]\\
 & \leq\frac{1}{\alpha}\,\, v^{l}+\frac{v^{l+1}}{\alpha}\frac{(l-2)}{(2l-1)}\\
 & \leq\frac{2}{\alpha}\,\, v^{l}.
\end{align*}}

Therefore,
\[
\int_{0}^{1}\left[\left(l-1\right)^{2}\frac{1}{u^{l}}\,\frac{1}{v^{l}}\frac{\int_{0}^{min\left\{ u^{2},v^{2}\right\} }s^{(l-2)}(1-s)^{1-\alpha}ds}{(1-u^{2})^{1-\alpha}(1-v^{2})^{1-\alpha}}\right]p(u)(1-u^{2})^{1-\alpha}udu
\]
\[
\leq \frac{4}{\alpha} \,p(v).
\]

\medskip{}
We conclude that
\[\underset{l}{sup} \, \vert\vert\mathcal{B}_{l}\vert\vert_{B\left(L_{\alpha}^{2}\left[0,1\right]\right)}\leq 15+ \frac{8}{\alpha}\leq \frac{23}{\alpha}.
\]
\vskip-1em
\end{proof}
\medskip
\begin{lemma}
If \,$Q$ is a multiplier of $\mathcal{D}_{\alpha}$, then
\[
\left(1-\vert z\vert^{2}\right)\vert\, Q'(z)\vert\leq\Vert M_{Q}\Vert_{B(\mathcal{D}_{\alpha})} \text{ for all } z \in \mathbb{D}.
\]
\end{lemma}
\begin{proof}
Define\; $\varphi:D\rightarrow D$ \, as \, $\varphi(z)=\frac{Q(z)}{\Vert M_{Q}\Vert_{B(\mathcal{D}_{\alpha})}}$
for all\, $z\in\mathbb{D}$. Now use the Schwarz lemma and the fact
that 
$\Vert\varphi\Vert_{\infty,\mathbb{D}}\leq\Vert M_{\varphi}\Vert_{\mathcal{B}(\mathcal{D}_{\alpha})}$
to complete the proof.
\end{proof}

\smallskip
\begin{lemma}
If $H\in\mathcal{M}\left(\mathcal{D}_{\alpha}\right)$, then
$
\vert H\,'\vert^{2}dA_{\alpha}$\,is a $\mathcal{D}_{\alpha}$-Carleson measure with the constant \, $4\vert\vert M_{H}\vert\vert_{B\left(\mathcal{D}_{\alpha}\right)}^{2}$.

\end{lemma}
\begin{proof}
To prove the lemma, we need to show that

\[
\int_{\mathbb{D}}\vert H\,'\vert^{2}\vert g\vert^{2}dA_{\alpha}\leq4\vert\vert M_{H}\vert\vert_{B\left(\mathcal{D}_{\alpha}\right)}^{2}\vert\vert g\vert\vert_{\mathcal{D}_{\alpha}}^{2}\,\,\,\mbox{for all}\,\, g\in\mathcal{D}_{\alpha}.
\]

\medskip{}

Let $g\in\mathcal{D}_{\alpha}$, then

\begin{align*}
\int_{\mathbb{D}}\vert H\,'\vert^{2}\vert g\vert^{2}dA_{\alpha} & =\int_{\mathbb{D}}\vert\left(Hg\right)'-Hg'\vert^{2}dA_{\alpha}\\
 & \leq2\int_{\mathbb{D}}\vert\left(Hg\right)'\vert^{2}dA_{\alpha}+2\int_{\mathbb{D}}\vert Hg'\vert^{2}dA_{\alpha}\\
 & \leq2\int_{\mathbb{D}}\vert Hg\vert^{2}d\sigma+2\int_{\mathbb{D}}\vert\left(Hg\right)'\vert^{2}dA_{\alpha}+2\int_{\mathbb{D}}\vert Hg'\vert^{2}dA_{\alpha}\\
 & \leq2\vert\vert M_{H}g\vert\vert_{\mathcal{D}_{\alpha}}^{2}+2\vert\vert M_{H}\vert\vert_{B\left(\mathcal{D}_{\alpha}\right)}^{2}\vert\vert g\vert\vert_{\mathcal{D}_{\alpha}}^{2}\\
 & \leq4\vert\vert M_{H}\vert\vert_{B\left(\mathcal{D}_{\alpha}\right)}^{2}\vert\vert g\vert\vert_{\mathcal{D}_{\alpha}}^{2}.
\end{align*}
\vskip-1em
This proves the Lemma.
\end{proof}

\section{Proof of Theorem 1}

 First, we will prove the theorem for smooth functions
on $\overline{\mathbb{D}}$ and get a uniform bound. Then we will
 use a compactness argument to remove the smoothness hypothesis.

\medskip{}
Assume that (a) and (b) of Theorem 1 hold for $F$ and $H$ and that
$F$ and $H$ are analytic on $\mathbb{D}_{1+\epsilon}(0)$.
Our main goal is to show that there exists a constant, $K(\alpha)<\infty$,
independent of $\epsilon$, so that for any polynomial, $h$, there
exists $\underline{u}_{h}\in\overset{\infty}{\underset{1}{\oplus}}\,\mathcal{D}_{\alpha}$
such that $M_{F}^{R}(\underline{u}_{h})=H^{3}h$ and $\Vert\underline{u}_{h}\Vert_{\mathcal{D}_{\alpha}}^{2}\leq K(\alpha)\,\Vert h\Vert_{\mathcal{D}_{\alpha}}^{2}$.

We take \;  $\underline{u}_{h}=\frac{F^{\star}H^{3}h}{FF^{\star}}-Q\widehat{W},$ where  $W= \frac{Q^{\star}F^{'\star}H^{3}h}{\left(FF^{\star}\right)^{2}}$.
Then $\underline{u}_{h}$ is analytic and $M_{F}^{R}(\underline{u}_{h})=H^{3}h.$
We know that
\[
\left\Vert \underline{u}_{h}\right\Vert _{\mathcal{D}_{\alpha}}^{2}=\int_{-\pi}^{\pi}\Vert\underline{u}_{h}(e^{it})\Vert^{2}\, d\sigma(t)+\int_{D}\Vert\left(\underline{u}_{h}(z)\right)^{'}\Vert^{2}\, dA_{\alpha}(z).
\]
 Condition (b) implies that
\[
\int_{-\pi}^{\pi}\left\Vert\frac{F^{\star}H^{3}h}{FF^{\star}}-Q\,\widehat{W}\right\Vert^{2}\, d\sigma(t)\leq15\left\Vert h\right\Vert_{\sigma}^{2}\mbox{ (see [Tr1]).}
\]
Hence, we only need to show that
\[
\int_{D}\left\Vert\left(\frac{F^{\star}H^{3}h}{FF^{\star}}-Q\,\widehat{W}\right)^{'}\right\Vert^{2}\, dA_{\alpha}(z)\leq K(\alpha)^{2}\Vert h\Vert_{\mathcal{D}_{\alpha}}^{2}
\]
 for some $K(\alpha)<\infty.$

\medskip{}
Now
\begin{align*}
\int_{D} & \left\Vert\left(\frac{F^{\star}H^{3}h}{FF^{\star}}-Q\widehat{W}\right)^{'}\right\Vert^{2}\, dA_{\alpha}(z)\\
  & \quad\leq 45\underset{(a')}{\underbrace{\int_{D}\left\Vert\frac{F^{\star}H^{2}H'h}{FF^{\star}}\right\Vert^{2}\, dA_{\alpha}(z)}}+5\underset{(b')}{\underbrace{\int_{D}\left\Vert\frac{F^{\star}H^{3}h'}{FF^{\star}}\right\Vert^{2}\, dA_{\alpha}(z)}}\\
 & \qquad+5\underset{(c')}{\underbrace{\int_{D}\left\Vert\frac{F^{\star}H^{3}h'F'F^{\star}}{\left(FF^{\star}\right)^{2}}\right\Vert^{2}\, dA_{\alpha}(z)}}+5\underset{(d')}{\underbrace{\int_{D}\left\Vert Q'\; \widehat{W}\right\Vert^{2}\, dA_{\alpha}(z)}}\\
 & \qquad+5\underset{(e')}{\underbrace{\int_{D}\Vert Q\left(\widehat{W}\right)^{'}\Vert^{2}\, dA_{\alpha}(z)}}.
\end{align*}

Then
\begin{align*}
(a')=\int_{D}\left\Vert\frac{F^{\star}H^{2}H'h}{FF^{\star}}\right\Vert^{2}\, dA_{\alpha}(z) & =\int_{D}\left\Vert\frac{F^{\star}}{\sqrt{FF^{\star}}}\frac{H}{\sqrt{FF^{\star}}}H\, H\,'h\right\Vert^{2}\, dA_{\alpha}(z)\\
 & \leq\int_{D}\left\Vert H\,'h\right\Vert^{2}\, dA_{\alpha}(z)\\
 & \leq4\,\left\Vert M_{H}\right\Vert_{B(\mathcal{D}_{\alpha})}^{2}\,\left\Vert h\right\Vert_{\mathcal{D}_{\alpha}}^{2}\quad \text{by Lemma 5.}
\end{align*}

\[
(b')=\,\int_{D}\left\Vert\frac{F^{\star}H^{3}h'}{FF^{\star}}\right\Vert^{2}\, dA_{\alpha}(z)\leq\int_{D}\left\Vert h'\right\Vert^{2}\, dA_{\alpha}(z)\leq\left\Vert h\right\Vert_{\mathcal{D}_{\alpha}}^{2}.
\]

\begin{align*}
(c')=\int_{D}\left\Vert\frac{F^{\star}H^{3}hF'F^{\star}}{\left(FF^{\star}\right)^{2}}\right\Vert^{2}\, dA_{\alpha}(z) & =\int_{D}\left\Vert\frac{F^{\star}F'F^{\star}}{\sqrt{FF^{\star}}}\frac{H^{2}}{FF^{\star}}\frac{H}{\sqrt{FF^{\star}}}\, h\right\Vert^{2}\, dA_{\alpha}(z)\\
 & \leq\int_{D}\left\Vert\frac{F^{\star}F'F^{\star}}{\sqrt{FF^{\star}}}\, h\right\Vert^{2}\, dA_{\alpha}(z)\\
 & \leq\int_{D}\left\Vert F'^{\star}h\right\Vert^{2}\, dA_{\alpha}(z)\leq4\,\Vert h\Vert_{\mathcal{D}_{\alpha}}^{2}.
\end{align*}
 We use condition (a) and Lemma 3 to estimate $(e')$.

\begin{align*}
(e') & =\int_{D}\left\Vert Q\left(\widehat{W}\right)^{'}\right\Vert^{2}\, dA_{\alpha}(z)\\
 & \leq \int_{D}\left\Vert\left(\widehat{W}\right)^{'}\right\Vert^{2}\, dA_{\alpha}(z) & (\mbox{ since } \Vert Q(z) \Vert_{B(l^2)}\leq 1)\\
 & \leq \left(\frac{23}{\alpha}\right)^2\int_{D}\left\Vert\frac{Q^{\star}F'^{\star}H^{3}h}{\left(FF^{\star}\right)^{2}}\right\Vert^{2}\, dA_{\alpha}(z) & \left(\mbox{by Lemma 3}\right)\\
 & \leq4\,\left(\frac{23}{\alpha}\right)^2\Vert h\Vert_{\mathcal{D}_{\alpha}}^{2}.
\end{align*}
\noindent 
So we only need to estimate $(d')$. For this, we have
\begin{align*}
(d')=
\int_{D}\left\Vert Q^{'}\, \widehat{W}\right\Vert^{2}\, dA_{\alpha}(z)&\leq2\underset{(f')}{\underbrace{\int_{D}\left\Vert Q^{'} \; \widehat{W}-Q^{'}\; \widetilde{\widehat{W}}\right\Vert^{2}\, dA_{\alpha}(z)}}\\&\;+2\int_{D}\left\Vert Q^{'}\; \widetilde{\widehat{W}}\right\Vert^{2}\, dA_{\alpha}(z),
\end{align*}
 where $\widetilde{\widehat{W}}(z)=\int_{-\pi}^{\pi}\frac{1-\left|z\right|^{2}}{\left|1-e^{-it}z\right|}\,\widehat{W}(e^{it})\, d\sigma(t)$
is the harmonic extension of $\widehat{W}$ from $\partial\mathbb{D}$
to $\mathbb{D}$.\\
Also,
\[
\int_{D}\Vert Q^{'}\widetilde{\widehat{W}}\Vert^{2}\, dA_{\alpha}(z)\leq8\,\Vert\widetilde{\widehat{W}}\Vert_{\mathcal{HD_{\alpha}}}^{2}.
\]
 Also, Lemmas 10 and 11 of {[}KT{]} imply that there is a $C_1<\infty$, independent of $W$ and $\alpha$, satisfying
 \[
\Vert\underline{\widetilde{\widehat{W}}}\Vert_{\mathcal{HD}_{\alpha}}^{2}\leq C_{1}\Vert W\Vert_{A_{\alpha}}^{2}.
\]
 But, as we showed above
\[
\Vert W\Vert_{A_{\alpha}}^{2}=\int_{D}\left\Vert\frac{Q^{\star}F^{'\star}H^{3}h}{\left(FF^{\star}\right)^{2}}\right\Vert^{2}\, dA_{\alpha}(z)\leq\int_{D}\Vert F^{'\star}h\Vert^{2}\, dA_{\alpha}(z)\leq4\,\Vert h\Vert_{\mathcal{D}_{\alpha}}^{2}.
\]
Thus,
\[
\int_{D}\left\Vert Q^{'}\widetilde{\widehat{W}}\right\Vert^{2}\, dA_{\alpha}(z)\leq C_{2}\,\Vert h\Vert_{\mathcal{D}_{\alpha}}^{2},
\]
 where $C_2< \infty$ is independent of $W$ and $\alpha$.
Now we are just left with estimating $(f')$. We have
\begin{align*}
&(f')  =\int_{D}\left\Vert Q^{'}\; \widehat{W}-Q^{'}\; \widetilde{\widehat{W}}\right\Vert^{2}\, dA_{\alpha}(z)\\
 & \; =\int_{D}\left\Vert Q^{'}\bigg[-\frac{1}{\pi}\int_{D}\frac{W(u)}{u-z}dA(u)-\int_{-\pi}^{\pi}\frac{1-\left|z\right|^{2}}{\left|1-e^{-it}z\right|}\widehat{W}(e^{it})d\sigma(t)\bigg]\right\Vert^{2}dA_{\alpha}(z)\\
 & \; =\frac{1}{\pi^{2}}\int_{D}\Vert Q^{'}\int_{D}W(u)\bigg[\frac{1}{u-z}+\int_{-\pi}^{\pi}\frac{1-\left|z\right|^{2}}{\left|1-e^{-it}z\right|}\, e^{-it}\frac{1}{1-ue^{-it}}\, d\sigma(t)\bigg]\\
 & \qquad \qquad \qquad \qquad dA(u)\Vert^{2}dA_{\alpha}(z)\\
 & \; =\frac{1}{\pi^{2}}\int_{D}\left\Vert Q^{'}\int_{D}W(u)\left[\frac{1}{u-z}+\frac{\bar{z}}{1-u\bar{z}}\right]dA(u)\right\Vert^{2}\, dA_{\alpha}(z)\\
  & \; =\frac{1}{\pi^{2}}\int_{D}\left\Vert Q'\int_{D}W(u)\left[\frac{1-\left|z\right|^{2}}{(u-z)(1-u\bar{z\,})}\right]dA(u)\right\Vert^{2}\, dA_{\alpha}(z)\\
 & \; =\frac{1}{\pi^{2}}\int_{D}\left\Vert Q'(z)\,(1-\vert z\vert^{2})\, T(W)(z)\right\Vert^{2}\, dA_{\alpha}(z)\\
 & \; \leq\frac{\left\Vert M_{Q}\right\Vert^{2}}{\pi^{2}}\Vert T(W)\Vert_{A_{\alpha}}^{2}\;\;\;\;\text{ by Lemma  4 }\\
     \end{align*}
 \begin{align*}
 & \; \leq \frac{256}{\alpha^4}\left\Vert M_{Q}\Vert^{2}\,\right\Vert W\Vert_{A_{\alpha}}^{2}\;\;\text{ by Lemma  2}.\\
 & \; \leq \frac{1024}{\alpha^4}\left\Vert M_{Q}\right\Vert^{2}\,\Vert h\Vert_{\mathcal{D}_{\alpha}}^{2}\\
 \end{align*}

By Lemma 9 of [KT], we have $\Vert M_Q \Vert_{B(\oplus \mathcal{HD}_{\alpha})}\leq \sqrt{86}$.
Combining all these pieces, we see that in the smooth case
\[
\left\Vert \underline{u}_{h}\right\Vert _{\mathcal{D}_{\alpha}}^{2}\leq K(\alpha)^2\left\Vert h\right\Vert _{\mathcal{D}_{\alpha}}^{2},
\]
 where $K(\alpha)=K_1 \Vert M_H  \Vert_{B(\mathcal{D_{\alpha}})}+\frac{K_2}{\alpha^2}$, where $K_1<\infty$ and $K_2<\infty$ are constants independent of $h$,\; $\epsilon$  and $\alpha$.

\medskip{}
By the proof of Theorem 1 in the smooth case, we have
$$
M_{H_{r}^3}M_{H_{r}^3}^{\star}\leq K(\alpha)^{2}M_{F_{r}}^{R}(M_{F_{r}}^{R})^{\star}\;\; \text{for }0\leq r<1,$$ where $F_r(z)=F(rz)\;\;  \text{for all}\;\;  z\in \mathbb{D}$.

\noindent Using a commutant lifting theorem for multipliers on weighted Dirichlet space from [KT]\; (see [BTV] for details), there exists $G_{r}\in\mathcal{M}(\mathcal{D}_{\alpha},\,\underset{1}{\overset{\infty}{\oplus}}\,\mathcal{D}_{\alpha}$)
so that $M_{F_{r}}^{R}M_{G_{r}}^{C}=M_{H_{r}^{3}}$ and $\Vert M_{G_{r}}^{C}\Vert\leq K(\alpha)$. The reader should note that such contraction always can be found for the multiplier algebra on reproducing kernel Hilbert spaces with complete Nevanlinna-Pick kernels.  
Then $M_{F_{r}}^{R}\rightarrow M_{F}^{R}$ and $M_{H_{r}^3}\rightarrow M_{H^3}$
as $r\uparrow1$ in the $\star-$strong topology.

\medskip{}
By compactness, we may choose a net with $G_{r_{\alpha}}^{\star}\rightarrow G^{\star}$
as $r_{\alpha}\rightarrow1^{-}.$ Since the multiplier algebra (as
operators) is WOT closed, $G\in\mathcal{M}(\mathcal{D}_{\alpha},\overset{\infty}{\underset{1}{\oplus}}\,\mathcal{D}_{\alpha}).$
Also, since $F_{r_{\alpha}}^{\star}\overset{s}{\rightarrow}F^{\star},$
we get $M_{H_{r}^3}^{\star}=M_{G_{r}}^{\star C}M_{F_{r}}^{\star R}\,\,\overset{WOT}{\rightarrow}\, M_{G}^{\star C}M_{F}^{\star R}$
and so $M_{F}^{R}M_{G}^{C}=M_{H^{3}}$ with entries of $G$ in $\mathcal{M}(\mathcal{D}_{\alpha})$
and $\Vert M_{G}^{C}\Vert\leq K(\alpha).$

This ends our proof.

\bigskip{}

\end{document}